\definecolor{dullmagenta}{rgb}{0.4,0,0.4}   
\definecolor{darkblue}{rgb}{0,0,0.4}
\definecolor{darkgreen}{rgb}{0,0.4,0}
\newtheorem*{definition*}{Definition}
\newtheorem{theorem}{Theorem}
\newtheorem*{theorem*}{Theorem}
\newtheorem{lemma}[theorem]{Lemma}
\newtheorem*{lemma*}{Lemma}
\newtheorem*{remark*}{Remark}
\numberwithin{equation}{section}
\numberwithin{theorem}{section}
\newcommand{\customlabel}[2]{%
   \protected@write \@auxout {}{\string \newlabel {#1}{{#2}{\thepage}{#2}{#1}{}} }%
   \hypertarget{#1}{#2}
}
\def\XXint#1#2#3{{\setbox0=\hbox{$#1{#2#3}{\int}$}
     \vcenter{\hbox{$#2#3$}}\kern-.5\wd0}}
\newcommand{\sign}{\operatorname{sign}}
\begin{document}

\title{Another counterexample to Zygmund's conjecture}
\author{Guillermo Rey}
\address{}
\email{guillermo.reyley@gmail.com}

\maketitle
\begin{abstract}
  We present a simple dyadic construction that yields a new counterexample to Zygmund's conjecture.
  Our result recovers Soria's classical results in dimension three and four, through a different construction,
  and gives new ones in all other dimensions.
\end{abstract}

\setcounter{section}{-1}
\section{Introduction}

Lebesgue's Differentiation Theorem states that
\begin{equation*}
  \lim_{r \to 0} \frac{1}{2r}\int_{x-r}^{x+r} f(y)\, dy = f(x)
\end{equation*}
almost everywhere for all $f \in L^1(\mathbb{R})$.
The area of \emph{differentiation of integrals} explores these problems further,
the central question being whether a given basis
\footnote{For us a \emph{basis} will be a collection of bounded measurable sets.}
differentiates a given function space.
We say that a basis $\mathcal{B}$ differentiates a function space $X$
if for all $f \in X$
\begin{equation*}
  \lim_{k \to \infty} \frac{1}{|R_k|}\int_{R_k} f(y) \, dy = f(x) \quad a.e.,
\end{equation*}
where $\{R_k\}$ is any sequence of elements in $\mathcal{B}$ containing $x$ and with diameters converging to $0$.

Generalizing Lebesgue's Differentiation Theorem to higher dimensions can be done in various ways.
For example when we consider the basis consisting of all squares in $\mathbb{R}^2$ then the same result holds.
However, if we allow the squares to stretch into rectangles of arbitrary eccentricity then a naive generalization
fails spectacularly and one is forced to require $f$ to have better integrability properties.
We refer the reader to \cite{GuzmanBook} for a thorough overview of this area, and to \cite{Stokolos} for a review of some recent developments.

Perhaps the simplest basis for which non-trivial obstacles start to appear is that given by all rectangles with sides parallel
to the coordinate axes, or
in higher dimensions, the basis of \emph{intervals} in $\mathbb{R}^d$: cartesian products of one-dimensional intervals.
Here the Jessen-Marcinkiewicz-Zygmund theorem states that the basis of all $d$-dimensional intervals differentiates $L\log L^{d-1}$ (and this is sharp).

In \cite{Zygmund1967} A. Zygmund showed that one can obtain better estimates by restricting the ``degrees of freedom'' of a basis. In particular he showed
\begin{theorem*}[\cite{Zygmund1967}]
  The basis consisting of $d$-dimensional intervals whose sides have no more than $k$ different sizes differentiates $L\log L^{k-1}$.
\end{theorem*}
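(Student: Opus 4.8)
The plan is to deduce the differentiation statement from a maximal inequality and then, by a pointwise domination, to reduce that inequality to a statement about a composition of Hardy--Littlewood maximal operators — a statement which is essentially the mechanism behind the Jessen--Marcinkiewicz--Zygmund theorem quoted above. Write $\mathcal B_k$ for the basis in the statement and $M_k$ for the associated maximal operator. By the standard argument for passing from a maximal inequality to differentiation — continuous functions are dense in $L\log L^{k-1}$ over a cube and are trivially differentiated by $\mathcal B_k$, while $M_k$ controls the oscillation of the remaining part — it suffices to prove that for every $\lambda>0$ and every $f$,
\begin{equation*}
  \big|\{x : M_k f(x) > \lambda\}\big| \;\le\; C_d \int \frac{|f(x)|}{\lambda}\Big(1 + \log^+\frac{|f(x)|}{\lambda}\Big)^{k-1}\, dx .
\end{equation*}

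Next I would strip off the combinatorics. Grouping the coordinates of a $d$-dimensional interval according to the common value of their side lengths produces a partition of $\{1,\dots,d\}$ into at most $k$ blocks, and there are only finitely many partitions $\pi=\{G_1,\dots,G_m\}$ with $m\le k$ blocks. Since $\mathcal B_k$ is the union over these $\pi$ of the sub-bases $\mathcal B_\pi$ of intervals whose sides are constant on each block of $\pi$, one has $M_k f\le\sum_\pi M_{\mathcal B_\pi}f$, so it is enough to treat a single such $\pi$. An element of $\mathcal B_\pi$ is a product $Q_1\times\dots\times Q_m$ with $Q_i$ a cube in the variables indexed by $G_i$, and the average over such a product is the iterated average; therefore, pointwise,
\begin{equation*}
  M_{\mathcal B_\pi} f \;\le\; \mathcal M_1\,\mathcal M_2\cdots\mathcal M_m f ,
\end{equation*}
where $\mathcal M_i$ is the Hardy--Littlewood maximal operator over cubes in the variables indexed by $G_i$, acting with the other variables frozen. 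Since $m-1\le k-1$, it now suffices to prove the inequality of the first paragraph, with exponent $m-1$, for the operator $\mathcal M_1\cdots\mathcal M_m$.

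Each $\mathcal M_i$ is sublinear, bounded on $L^\infty$ with norm $1$, and of weak type $(1,1)$ on $\mathbb R^d$ (a Vitali covering in the variables indexed by $G_i$ together with Fubini). The desired bound
\begin{equation*}
  \big|\{|\mathcal M_1\cdots\mathcal M_m f|>\lambda\}\big| \;\le\; C\int\frac{|f|}{\lambda}\Big(1+\log^+\frac{|f|}{\lambda}\Big)^{m-1}\, dx
\end{equation*}
is exactly the Jessen--Marcinkiewicz--Zygmund estimate, which is the case $m=d$ with each $\mathcal M_i$ one-dimensional; its proof uses only that each factor is sublinear, $L^\infty$-bounded and of weak type $(1,1)$, and the cube maximal function in any number of variables has these three properties, so the argument applies verbatim. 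Concretely one inducts on $m$: the base case $m=1$ is the weak type $(1,1)$ bound for the cube maximal function, and in the inductive step one fixes $\lambda$, writes $f=f\mathbf 1_{|f|>\lambda}+f\mathbf 1_{|f|\le\lambda}$, disposes of the second summand using the $L^\infty$ bounds of all the factors, and for the first summand feeds the inductive hypothesis for $\mathcal M_2\cdots\mathcal M_m$ into the weak type $(1,1)$ bound for $\mathcal M_1$, organising the distribution function into dyadic layers so that exactly one further power of the logarithm is produced. (As a cruder alternative, iterated Marcinkiewicz interpolation gives $\|\mathcal M_1\cdots\mathcal M_m\|_{L^p\to L^p}\le(C_d\,p')^m$ for $1<p\le 2$, and a Yano-type extrapolation then yields a bound that suffices for differentiating $L\log L^{k}$ — one power short of the sharp statement.)

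The heart of the matter is this last inductive step: turning the two endpoint bounds for one extra Hardy--Littlewood factor into precisely one more power of $\log$ — not zero and not two — is where the Calder\'on--Zygmund-type splitting has to be arranged with care, in particular keeping track of the large values of $f$ and of the tails of the maximal functions. By contrast the two reductions above are routine; the only ingredient special to the $k$-sizes basis is the trivial observation that bounding the number of distinct side lengths is the same as restricting to a bounded family of coordinate partitions, and this is exactly what turns the exponent $d-1$ of Jessen--Marcinkiewicz--Zygmund into $k-1$.
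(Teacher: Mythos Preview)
The paper does not give a proof of this statement: it is quoted in the introduction as a known theorem of Zygmund, with a citation to \cite{Zygmund1967}, and is used only as motivation for the conjecture that the paper goes on to disprove. So there is no ``paper's own proof'' to compare against.

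That said, your proposal is correct and is essentially Zygmund's original argument. The reduction from differentiation to a weak-type maximal inequality is standard; the key observation --- that an interval with at most $k$ distinct side lengths factors, after a fixed coordinate partition $\pi$ into $m\le k$ blocks, as a product of $m$ cubes --- is exactly Zygmund's; and the pointwise domination $M_{\mathcal B_\pi}f\le \mathcal M_1\cdots\mathcal M_m f$ together with the iterated weak-type/$L^\infty$ argument producing one logarithm per factor is precisely the Jessen--Marcinkiewicz--Zygmund mechanism. Your inductive step is the right one and is the place where all the work sits; the aside about Yano-type extrapolation losing one logarithm is also accurate.
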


From this result a conjecture was born: given $d$ functions $\Phi_i : \mathbb{R}_+^k \to \mathbb{R}_+$ which are non-decreasing in each variable, the
basis consisting of all $d$-dimensional intervals with sidelengths
$$
  \Phi_1(t_1, \dots, t_k) \times \dots \times \Phi_d(t_1, \dots, t_k)
$$
differentiates $L\log L^{k-1}$.

In this setting, Zygmund's result shows that the conjecture is true when
$$
\Phi_i(t_1, \dots, t_k) = t_{\xi(i)} \quad \forall i
$$ where $\xi$ is any function on $\{1, \dots, d\}$ taking at most $k$ different values in $\{1, \dots, d\}$.

In \cite{CordobaZygmundsConjecture} A. C\'ordoba proved the conjecture for the special case of intervals in $\mathbb{R}^3$ of dimensions
$$
  s \times t \times \Phi(s,t),
$$
for any function $\Phi : \mathbb{R}^2_+ \to \mathbb{R}_+$ non-decreasing in each variable.

Then in \cite{SoriaCounterexample} F. Soria found a counterexample in dimension three. In particular, he constructs a basis of intervals of dimensions
$$
  s \times t \varphi(s) \times t \psi(s),
$$
with certain explicit increasing functions $\varphi$ and $\psi$ which cannot differentiate $L \log L$. He also constructs
a basis of intervals of the form
\begin{equation*}
  s \times t \times \Phi_1(s,t) \times \Phi_2(s,t)
\end{equation*}
which cannot differentiate $L \log L$ either.

In this article we construct various counterexamples extending the ones in \cite{SoriaCounterexample} to higher dimensions.
In the first section we construct bases of intervals with dimensions of the form
\begin{equation*}
  \Phi_1(s,t) \times \dots \times \Phi_d(s,t).
\end{equation*}
for all dimensions $d \geq 3$ and
which do not differentiate $L \log L^{d-2}$, thus showing that this version of Zygmund's conjecture fails even more dramatically in higher dimensions.

In the last section we show that
in dimensions four and higher one can also find bases with $\Phi_1(s,t) = s$ and $\Phi_2(s,t) = t$ which do not differentiate
$L \log L^{d-3}$.

\section{An extension lemma and the construction}

The counterexample will be purely dyadic, so it will be convenient to modify slightly the statement of Zygmund's conjecture.
Define a $k$-dimensional dyadic Zygmund
basis in $\mathbb{R}^d$ to be a collection $\mathcal{B}$ consisting of all dyadic intervals of dimensions
\begin{equation*}
  2^{\Phi_1(m)} \times \dots \times 2^{\Phi_d(m)} \quad \text{for some } m \in \mathbb{Z}^k,
\end{equation*}
where $\Phi_1, \dots, \Phi_d$ are functions from $\mathbb{Z}^d$ to $\mathbb{Z}$ which are non-decreasing in each variable.

In this setting, the dyadic Zygmund conjecture states that all $k$-dimensional dyadic Zygmund bases differentiate $L(\log^+ L)^{k-1}$.

A simple application of the one-third trick (see for example \cite{LNBook}) shows that this form of Zygmund's conjecture is equivalent to the one given in the introduction
as long as one allows ``dyadic interval'' to mean a possibly-shifted standard dyadic interval.

To construct the counterexample we need the following
\begin{lemma} \label{Lemma}
  Given any function $\varphi: \mathbb{Z} \to \mathbb{Z}$ there exists a function $\Phi : \mathbb{Z}^2 \to \mathbb{Z}$ satisfying
  \begin{enumerate}
    \item $\Phi(m,-m) = \varphi(m)$ for all $m \in \mathbb{Z}$.
    \item $\Phi$ is non-decreasing in each variable.
  \end{enumerate}
\end{lemma}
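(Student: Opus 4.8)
The plan is as follows. The first thing to notice is that the anti-diagonal $D=\{(m,-m):m\in\mathbb Z\}$ is an \emph{antichain} for the coordinatewise partial order on $\mathbb Z^2$: for $m\neq m'$ one has neither $(m,-m)\le(m',-m')$ nor the reverse. Hence condition~(1) imposes no compatibility requirement on $\varphi$ at all, and one is free to build $\Phi$ from $\varphi|_D$ by the standard device of taking, at each point, the largest prescribed value sitting below it. Concretely, the prescribed points $\le(x,y)$ are $\{(m,-m):m\le x,\ -m\le y\}=\{(m,-m):-y\le m\le x\}$, which suggests $\Phi(x,y)=\max\{\varphi(m):-y\le m\le x\}$. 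The reason for using $D$ specifically is that this index set is a \emph{finite} interval, with $x+y+1$ elements, so the maximum is an honest integer --- provided the set is nonempty, i.e.\ $x+y\ge 0$.

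What needs fixing is the half-plane $x+y<0$, where there is no prescribed point below $(x,y)$ and the recipe would return $\max\emptyset=-\infty$, which is not allowed. There I will use the dual recipe instead: the prescribed points $\ge(x,y)$ are $\{(m,-m):x\le m\le -y\}$, once more a nonempty finite interval (with $-(x+y)+1$ elements), so $\min\{\varphi(m):x\le m\le -y\}$ is a well-defined integer. This leads to the definition
\[
  \Phi(x,y)=
  \begin{cases}
    \displaystyle\max_{-y\le m\le x}\varphi(m), & x+y\ge 0,\\[2ex]
    \displaystyle\min_{x\le m\le -y}\varphi(m), & x+y\le 0.
  \end{cases}
\]
On the overlap $x+y=0$ both lines equal $\varphi(x)$, so $\Phi$ is well defined, and setting $y=-x$ in either line gives~(1).

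It then remains to verify~(2), that $\Phi$ is non-decreasing in each variable separately. It suffices to check $\Phi(x,y)\le\Phi(x+1,y)$ for all $(x,y)$; the verification in the second variable is word-for-word the same, with the roles of the two endpoints of the relevant interval interchanged (increasing $x$ moves the right endpoint right in the first case and the left endpoint right in the second; increasing $y$ does the mirror-image thing). This splits into three cases according to the sign of $x+y$: if $x+y\ge 0$ then $\Phi(x+1,y)$ is a maximum over the interval $[-y,x+1]\supset[-y,x]$; if $x+y\le -2$ then $\Phi(x+1,y)$ is a minimum over $[x+1,-y]\subset[x,-y]$; and in the single remaining case $x+y=-1$ the inequality reads $\min(\varphi(x),\varphi(x+1))\le\varphi(x+1)$. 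Everything is then immediate from monotonicity of $\max$ and $\min$ under inclusion of index sets.

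I expect the only real (and still very mild) obstacle to be the one already flagged: the naive ``sup of lower corners'' construction is forced to be $-\infty$ below $D$, so one must patch it with the dual ``inf of upper corners'' construction, and the content of the proof is checking that these two recipes glue along $D$ into a single coordinatewise non-decreasing $\mathbb Z$-valued function. The antichain observation is what guarantees that no obstruction to~(1) can arise in the first place.
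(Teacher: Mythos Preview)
Your construction is exactly the one in the paper: define $\Phi$ as the max of $\varphi$ over $[-y,x]$ on the half-plane $x+y\ge 0$ and as the min over $[x,-y]$ on $x+y\le 0$. Your write-up is in fact more careful than the paper's, since you explicitly verify the gluing on the antidiagonal and the crossing case $x+y=-1$, which the paper leaves implicit.
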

\begin{proof}
  For $m_1 \geq -m_2$ set
  \begin{equation*}
    \Phi(m_1, m_2) = \max\Bigl( \varphi(m):\, -m_2 \leq m \leq m_1\Bigr).
  \end{equation*}

  The set over which the maximum is taken increases as $m_1$ or $m_2$ increase, so $\Phi$ is non-decreasing in each variable on the right-hand side of the
  antidiagonal.
  On the left-hand side one can define $\Phi$ similarly: for $m_1 \leq -m_2$ set
  \begin{equation*}
    \Phi(m_1, m_2) = \min\Bigl( \varphi(m):\, m_1 \leq m \leq -m_2\Bigr),
  \end{equation*}
  and a similar argument applies.
\end{proof}

With this lemma we can now prove our main result
\begin{theorem} \label{Theorem}
  For any $2 \leq k \leq d$ there exist functions $\Phi_i : \mathbb{Z}^k \to \mathbb{Z}$ for $1 \leq i \leq d$
  which are non-decreasing in each
  variable, and for which the associated basis of dyadic rectangles does not differentiate
  $L(\log^+ L)^{d-2}$.
\end{theorem}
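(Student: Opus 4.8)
The plan is to use Lemma~\ref{Lemma} to remove the monotonicity constraint along one of the parameters, thereby reducing the problem to producing a single bad \emph{one-parameter} family of dyadic rectangles, and then to let that family absorb the configurations that witness the sharpness of the Jessen-Marcinkiewicz-Zygmund theorem in $\mathbb{R}^d$. First I would reduce to the case $k=2$: if $\Psi_1,\dots,\Psi_d\colon\mathbb{Z}^2\to\mathbb{Z}$ have the stated properties for $k=2$, then for any $k$ with $2\le k\le d$ the functions $\Phi_i(m_1,\dots,m_k):=\Psi_i(m_1,m_2)$ are non-decreasing in each of the $k$ variables and generate exactly the same collection of rectangles, hence the same basis. (For $k=d$ one could alternatively just take the full basis of dyadic intervals, $\Phi_i(m)=m_i$, which classically already fails to differentiate $L(\log^+ L)^{d-2}$; the point is the range $k<d$.)

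Next I would invoke the Lemma. Choose \emph{arbitrary} functions $\varphi_1,\dots,\varphi_d\colon\mathbb{Z}\to\mathbb{Z}$ — with no monotonicity imposed — and let $\Phi_i$ be the extension it provides, so that each $\Phi_i$ is non-decreasing in each variable and $\Phi_i(m,-m)=\varphi_i(m)$. Taking $(m_1,m_2)=(m,-m)$ exhibits the $2$-parameter basis $\mathcal{B}$ attached to $(\Phi_i)$ as containing the $1$-parameter sub-basis
\[
  \mathcal{B}_0=\bigl\{\,\text{dyadic intervals of dimensions }2^{\varphi_1(m)}\times\dots\times 2^{\varphi_d(m)}\ :\ m\in\mathbb{Z}\,\bigr\}.
\]
A basis containing a sub-basis that fails to differentiate a space must itself fail to differentiate it — a bad sequence for the sub-basis is a bad sequence for the whole basis — so it is enough to choose the sequence $m\mapsto(\varphi_1(m),\dots,\varphi_d(m))\in\mathbb{Z}^d$ so that $\mathcal{B}_0$ does not differentiate $L(\log^+ L)^{d-2}$, and by the Lemma that sequence may be taken completely arbitrary. (This is exactly why the hypothesis $k\ge2$ is needed: for $k=1$ the exponent vector would be forced to be monotone in the single parameter, far too rigid to encode a bad configuration.)

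It then remains to choose the $\varphi_i$ well. By the sharpness half of the Jessen-Marcinkiewicz-Zygmund theorem, the basis of all dyadic intervals in $\mathbb{R}^d$ does not differentiate $L(\log^+ L)^{d-2}$ (see \cite{GuzmanBook}); more precisely, the usual counterexample produces a single function $g\in L(\log^+ L)^{d-2}$ whose averages over that basis have $\limsup>g$ on a set of positive measure, where $g=\sum_n c_n h_n$ is a superposition of localized blocks, each $h_n$ supported in a small cube and witnessed by a family of shrinking dyadic intervals whose dimension vectors lie in some finite set $T_n\subset\mathbb{Z}^d$. (After the one-third trick this construction may be taken genuinely dyadic; alternatively one builds the blocks $h_n$ directly, by a short dyadic iteration generalizing the two-dimensional example across the $d$ coordinates.) Let $S:=\bigcup_n T_n$, a countable subset of $\mathbb{Z}^d$; enumerate $S=\{v_m:m\in\mathbb{Z}\}$ and set $\varphi_i(m):=(v_m)_i$. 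Then every interval occurring in every one of those families lies in $\mathcal{B}_0$, so the same function $g$, with the same bad shrinking sequences of intervals, witnesses that $\mathcal{B}_0$ — and hence $\mathcal{B}$ — does not differentiate $L(\log^+ L)^{d-2}$. Through the reduction of the first paragraph this gives the required $\Phi_i$ for every $2\le k\le d$.

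The Lemma-based reductions and the transfer in the last step are routine once the classical counterexample is available in dyadic form; the one point to watch is that this counterexample must use only rectangle shapes from a countable set, which is automatic for the standard dyadic constructions and is precisely what lets a single parameter carry all of them. The real work — and, I expect, where the paper's ``simple dyadic construction'' lies — is producing the blocks $h_n$ themselves: a family of finite dyadic configurations defeating the weak-type $(1,1)$ estimate for the maximal operator on $L(\log^+ L)^{d-2}$ by an unbounded factor, simultaneously in every dimension $d\ge3$. The delicate part there is to run the iterated ``spreading'' so that the measure of the bad set grows strictly faster than the relevant $L(\log^+ L)^{d-2}$-size of $h_n$, uniformly in $d$.
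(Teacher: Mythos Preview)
Your approach is essentially the paper's: reduce to $k=2$ by ignoring the extra variables, use the Lemma to embed an arbitrary one-parameter sequence of exponent vectors along the antidiagonal $(m,-m)$, and then choose that sequence so that the resulting sub-basis $\mathcal{B}_0$ already fails to differentiate $L(\log^+ L)^{d-2}$. The paper streamlines your last step: instead of enumerating only the shape-set $S$ used by some particular JMZ counterexample, it simply takes $m\mapsto(\varphi_1(m),\dots,\varphi_d(m))$ to be a \emph{bijection} of $\mathbb{Z}$ onto $\mathbb{Z}^d$. Then $\mathcal{B}_0$ is literally the full basis of dyadic intervals in $\mathbb{R}^d$, and its failure to differentiate $L(\log^+ L)^{d-2}$ is just the classical sharpness of Jessen--Marcinkiewicz--Zygmund, quoted without further argument. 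So your final paragraph misreads where the content lies: there is no ``real work'' in producing blocks $h_n$, and the paper's ``simple dyadic construction'' refers to the Lemma plus this bijection trick, not to a new sharpness construction.
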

\begin{proof}
  Choose any bijection $\psi : \mathbb{Z} \to \mathbb{Z}^d$ and define for $1 \leq i \leq d$
  \begin{equation*}
    \varphi_i(m) = \psi_i(m) \quad \forall m \in \mathbb{Z}.
  \end{equation*}
  Now let $\Phi_i$ be the function provided by Lemma \ref{Lemma} applied to $\varphi_i$.

  By construction we have
  \begin{equation*}
    \{ (\Phi_1(m,-m), \dots, \Phi_d(m,-m) : m \in \mathbb{Z} \} = \mathbb{Z}^d,
  \end{equation*}
  so the collection of all dyadic intervals with dimensions
  \begin{equation*}
    2^{\Phi_1(m,n)} \times \dots \times 2^{\Phi_d(m,n)} \quad m,n \in \mathbb{Z}
  \end{equation*}
  is the collection of all dyadic intervals in $\mathbb{R}^d$.

  This proves the theorem when $k = 2$, but for larger $k$ one can just define
  $$
    \Psi_i(m_1, \dots, m_k) = \Phi_i(m_1, m_2)
  $$
  obtaining the same result.
\end{proof}

\begin{remark*}
  To put our result in context
  one should compare Theorem \ref{Theorem} with Soria's construction in \cite{SoriaCounterexample}.
  His construction yields counterexamples to Zygmund's conjecture in dimensions three and higher,
  so Theorem \ref{Theorem} gives an alternative construction in dimension three.

  In higher dimensions, the number of degrees of freedom of the functions $\Phi_i$ in Soria's construction depends linearly on the dimension,
  while in ours it depends only on two variables for all dimensions.
\end{remark*}

\section{Generalizations}

The previous section shows that any collection of dyadic intervals in $\mathbb{R}^d$ can be reproduced with intervals of dimensions
\begin{equation*}
  \Psi_1(s,t) \times \dots \times \Psi_d(s,t)
\end{equation*}
where all the functions $\Psi_i : \mathbb{R}_+^2 \to \mathbb{R}_+$ are non-decreasing in each variable.
In view of \cite{CordobaZygmundsConjecture} it is also of interest to obtain bases of intervals with dimensions of the form
\begin{equation*}
  t_1 \times \dots \times t_k \times \Psi_1(t_1, \dots, t_k) \times \dots \times \Psi_{d-k}(t_1, \dots, t_k).
\end{equation*}
One could be tempted to conjecture that such bases should differentiate $L \log L^{k-1}$. A. C\'ordoba's result shows
this is the case in dimension $d=3$ with $k=2$.

In Proposition 6 of \cite{SoriaCounterexample} A. Soria gave a counterxample to this conjecture with $d=4$ and $k=2$.
We now show how to adapt the ideas introduced in the previous section to obtain higher-dimensional generalizations of such counterexamples. In particular, we will
construct a basis of intervals with dimensions of the form
\begin{equation*}
  s \times t \times \Psi_1(s, t) \times \dots \times \Psi_{d-2}(s,t),
\end{equation*}
with $\Psi_i : \mathbb{R}_+^2 \to \mathbb{R}_+$ non-decreasing in both variables, which does not differentiate $L \log L^{d-3}$. Also like in
\cite{SoriaCounterexample}, the relevant elements of this basis are those with $s+t = 0$.

As before, we will work in the dyadic setting.
Fix a dimension $d \geq 4$ and for each $n \in \mathbb{N}$ let $i \mapsto \beta_{n,i}$ be the sequence of all
$(d-2)$-tuples $(m_1, \dots, m_{d-2}) \in \mathbb{Z}^{d-2}$ whose sum is $0$ and which satisfy $0 \leq m_j \leq n$ for all $1 \leq j \leq d-3$.
For example, when $d=4$ the first few terms are
\begin{align*}
  \beta_0 &= (0, 0) \\
  \beta_1 &= (0, 0), (1, -1) \\
  \beta_2 &= (0, 0), (1, -1), (2,-2) \\
          &\dots
\end{align*}
We can form a sequence, which we will also denote by $\beta$ but with only one index, by concatenating the terms above. Each $\beta_{n, (\cdot)}$ contains
$\sim n^{d-3}$ elements, so $\beta_0, \dots, \beta_{O(n^{d-2})}$ will have traversed all the elements of $\beta_{n, (\cdot)}$ (with repetitions).

Now we apply Lemma \ref{Lemma} to this sequence. In particular define for $1 \leq i \leq d-2$
\begin{equation*}
  \varphi_i(m) = \pi_i(\beta_m)
\end{equation*}
for $m \geq 0$ and $\varphi_i(m) = 0$ for all $m < 0$. Let $\Phi_i$ be the corresponding extensions provided by Lemma \ref{Lemma}.

For real $s$ let $\tau(s) = s^{d-2} \sign(s)$ where $\sign(s) = 1$ for positive $s$ and $-1$ for negative $s$.
Finally, consider the collection of $d$-tuples
\begin{equation*}
  \mathcal{A} = \bigl\{ \bigl( s, t, \Phi_1(\tau(s), \tau(t)), \dots, \Phi_{d-2}(\tau(s), \tau(t)) \bigr): s,t \in \tau^{-1}(\mathbb{Z}) \bigr\},
\end{equation*}
and let $\mathcal{E}$ be the collection of all dyadic intervals in $\mathbb{R}^d$ associated with $\mathcal{A}$, i.e.: dyadic intervals $R$ with
dimensions
\begin{equation*}
  2^s \times 2^t \times 2^{\Phi_1(\tau(s),\tau(t))} \times \dots \times 2^{\Phi_{d-2}(\tau(s),\tau(t))} \quad s,t \in \tau^{-1}(\mathbb{Z}).
\end{equation*}
Note that after a trivial change of variables, this basis consists of intervals with dimensions
\begin{equation*}
  s \times t \times \Psi_1(s,t) \times \dots \times \Psi_{d-2}(s,t) \quad s,t \in \mathcal{A}' \subset \mathbb{R}_+
\end{equation*}
for non-decreasing functions $\Psi_i$ and a subset $\mathcal{A}'$ of $\mathbb{R}_+$.

We will show
that he family $\mathcal{E}'$ consisting of those intervals $R$ in $\mathcal{E}$ with
\begin{equation*}
  |\pi_1(R)||\pi_2(R)| = 1
\end{equation*}
cannot differentiate $L \log L^{d-3}$. To prove this we show, as in \cite{SoriaCounterexample},
that a certain weak-type estimate cannot hold.

\begin{theorem}
  Let $\mathcal{M}$ be the maximal operator associated to the family of intervals $\mathcal{E}'$, i.e.:
  \begin{equation*}
    \mathcal{M}f(x) = \sup_{R \in \mathcal{E}'} \frac{\mathbbm{1}_R(x)}{|R|} \int_R |f(y)| \, dy.
  \end{equation*}

  Suppose that we have for all non-negative functions $f$
  \begin{equation} \label{WeakType}
    |\{x \in \mathbb{R}^d:\, \mathcal{M}f(x) \geq 1\}| \lesssim \int_{\mathbb{R}^d} f(x) \log(e+f(x))^{\alpha},
  \end{equation}
  then $\alpha \geq d-2$.
\end{theorem}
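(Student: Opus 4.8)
The plan is to argue by contradiction and exhibit test functions violating \eqref{WeakType} whenever $\alpha<d-2$. I begin with a routine reduction. Fix $\varepsilon\in(0,1)$ and a measurable set $E$; since every $R\in\mathcal{E}'$ has $|R|=1$, any $R$ with $|R\cap E|\geq\varepsilon$ satisfies $\tfrac1{|R|}\int_R\varepsilon^{-1}\mathbbm{1}_E\geq1$ and hence $R\subseteq\{\mathcal{M}(\varepsilon^{-1}\mathbbm{1}_E)\geq1\}$. Applying \eqref{WeakType} to $f=\varepsilon^{-1}\mathbbm{1}_E$, and using $\int f\log(e+f)^{\alpha}\asymp\tfrac{|E|}{\varepsilon}(\log\tfrac{e}{\varepsilon})^{\alpha}$, yields for every finite subfamily $\{R_i\}\subseteq\mathcal{E}'$ with $|R_i\cap E|\geq\varepsilon$ the estimate
\[
  \Bigl|\,\bigcup_i R_i\,\Bigr|\ \lesssim\ \frac{|E|}{\varepsilon}\Bigl(\log\frac{e}{\varepsilon}\Bigr)^{\alpha}.
\]
So it suffices to produce, for a sequence $\varepsilon_N\to0$, sets $E_N$ and finite subfamilies $\{R_i\}\subseteq\mathcal{E}'$ with $|R_i\cap E_N|\geq\varepsilon_N$, $\bigl|\bigcup_i R_i\bigr|\geq c>0$, and $|E_N|\lesssim\varepsilon_N(\log\tfrac1{\varepsilon_N})^{-(d-2)}$: these three facts force $c\lesssim(\log\tfrac1{\varepsilon_N})^{\alpha-(d-2)}$ for all $N$, and letting $N\to\infty$ gives $\alpha\geq d-2$.

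The second step extracts from the construction of $\mathcal{E}'$ exactly which rectangles are at our disposal. Unwinding the enumeration $\beta$: the block $\beta_{n,\cdot}$ is precisely the list of all $(d-2)$-tuples summing to $0$ whose first $d-3$ entries lie in $\{0,\dots,n\}$; it occupies the indices $m$ in a window $[S(n-1),S(n))$ with $S(k)\asymp k^{d-2}$, on which $\tau^{-1}(m)=|m|^{1/(d-2)}\sign(m)\asymp n$; and every fixed tuple recurs in every later block. Hence, for all large $n$, the family $\mathcal{E}'$ contains all translates of every box of dimensions
\[
  2^{L}\times2^{-L}\times2^{a_1}\times\cdots\times2^{a_{d-3}}\times2^{-(a_1+\cdots+a_{d-3})}
\]
for any exponents $0\leq a_j\leq n$ and, independently, for each of $\asymp n$ ``planar scales'' $L$, all of size $\asymp n$, obtained by letting the block index run over $[n,2n]$. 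The role of $\tau(s)=s^{d-2}\sign(s)$ is precisely to bring the planar scale $L$ to the same order of magnitude as the exponents $a_j$ of the remaining coordinates, so that the two coordinate groups are tied to a common scale $\asymp n$; this coupling is what will pin the exponent at $d-2$.

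The third and main step is to build the configuration $(E_N,\{R_i\})$ out of these boxes by an iterated construction of Perron-tree type with $d-2$ levels: $d-3$ of the levels split in the coordinates $x_3,\dots,x_d$, each exploiting the $\asymp n$ available values of one exponent $a_j$, and one further level splits in the $(x_1,x_2)$-plane, exploiting the $\asymp n$ available planar scales $2^{L}:2^{-L}$. Roughly, each level multiplies the ``gain'' $\varepsilon\bigl|\bigcup_i R_i\bigr|/|E|$ by a factor $\asymp n$ while keeping $\bigl|\bigcup_i R_i\bigr|\gtrsim1$, so after the $d-2$ levels the gain is $\asymp n^{d-2}$; choosing $\varepsilon_N\asymp 2^{-n}$ (so that $\log\tfrac1{\varepsilon_N}\asymp n$) then gives $|E_N|\asymp\varepsilon_N\,n^{-(d-2)}\asymp\varepsilon_N(\log\tfrac1{\varepsilon_N})^{-(d-2)}$, as required. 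Heuristically, $\mathcal{E}'$ behaves like the scale-coupled product of the area-one strong maximal operator in the plane, sharp at $\log^1$, with the volume-one $(d-2)$-dimensional interval maximal operator, sharp at $\log^{d-3}$; the coupling suppresses the extra logarithm that a free product would contribute, and $1+(d-3)=d-2$. For $d=4$ this reproduces the mechanism behind Proposition 6 of \cite{SoriaCounterexample}.

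The main obstacle is to make the third step legitimate given that $\mathcal{E}'$ is only a one-parameter family: the planar scale and the scales of the other coordinates cannot be chosen independently, being all read off a single index $m$. What saves the construction is the second step: because a given shape recurs in every sufficiently large block, it pairs with every sufficiently large planar scale, so that at each node of the tree the box of prescribed shape genuinely belongs to $\mathcal{E}'$ (one runs the planar Perron tree with scales confined to a window $[\asymp n,\asymp 2n]$, which still supplies the $\asymp n$ distinct scales that level needs). Granting this, what remains is the intricate but by now classical bookkeeping of the $(d-2)$-fold iteration --- arranging the nested subfamilies so that $\bigl|\bigcup_i R_i\bigr|$ stays bounded below while $|E_N|$ is driven all the way down to $\varepsilon_N(\log\tfrac1{\varepsilon_N})^{-(d-2)}$.
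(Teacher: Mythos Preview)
Your reduction in the first paragraph is fine, but the target you set for yourself is impossible to hit. You ask for sets $E_N$ and rectangles $R_i\in\mathcal{E}'$ with $|R_i\cap E_N|\geq\varepsilon_N$ and $|E_N|\lesssim\varepsilon_N(\log\tfrac{1}{\varepsilon_N})^{-(d-2)}$. But $R_i\cap E_N\subseteq E_N$ forces $|E_N|\geq\varepsilon_N$, so the second requirement can never hold once $\varepsilon_N$ is small. Equivalently, since every $R\in\mathcal{E}'$ has $|R|=1$, your ``gain'' satisfies
\[
\frac{\varepsilon\,\bigl|\bigcup_i R_i\bigr|}{|E|}\ \leq\ \bigl|\bigcup_i R_i\bigr|,
\]
so you cannot simultaneously keep $\bigl|\bigcup_i R_i\bigr|$ bounded and drive the gain up to $n^{d-2}$. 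The Perron--tree iteration you sketch therefore cannot produce the configuration you describe; the normalization is the wrong way around.

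The paper's argument avoids all of this by taking the opposite normalization: set $E=[0,2^{-k})^d$, $\varepsilon=|E|=2^{-dk}$ (so $|E|/\varepsilon=1$), and let $\bigl|\bigcup_i R_i\bigr|$ grow. No iteration is needed. One simply observes that for every $1\leq j\leq k$ and every $(d-2)$-tuple $(m_1,\dots,m_{d-2})$ with $0\leq m_i\leq j/C_d$ for $i\leq d-3$ and $\sum m_i=0$, the block structure of $\beta$ (your ``second step'') places that tuple at some index $n$ with $\tau^{-1}(n)\leq j$, so the rectangle
\[
\Bigl[0,2^{\tau^{-1}(n)}\Bigr)\times\Bigl[0,2^{-\tau^{-1}(n)}\Bigr)\times[0,2^{m_1})\times\cdots\times[0,2^{m_{d-2}})\in\mathcal{E}'
\]
contains $[0,2^{-k})^d$. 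Summing over $j$ gives $\sim\sum_{j=1}^k j^{d-3}\sim k^{d-2}$ such rectangles; they are sparse, hence their union has measure $\sim k^{d-2}$. Testing \eqref{WeakType} on $f_k=2^{dk}\mathbbm{1}_{[0,2^{-k})^d}$ then yields $k^{d-2}\lesssim k^{\alpha}$. Your structural analysis of $\mathcal{E}'$ in the second step is exactly what is needed here; the Perron--tree layer on top of it is both unnecessary and, as formulated, unachievable.
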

\begin{proof}
  For $k \in \mathbb{N}$ define
  \begin{equation*}
    f_k(x) = 2^{dk} \mathbbm{1}_{[0, 2^{-k})}(x).
  \end{equation*}

  Let $\overline{R}$ be any dyadic interval in $\mathbb{R}^{d-2}$ of the form
  \begin{equation*}
    [0, 2^{m_1}) \times \dots \times [0, 2^{m_{d-2}}),
  \end{equation*}
  with $0 \leq m_i \leq j/C_d$ for $i \leq d-3$, $1 \leq j \leq k$, and with $|R| =1$, where $C_d > 1$ is a large dimensional constant that we will choose later.
  Note that there exist $\sim j^{d-3}$ such intervals.

  Given $\overline{R}$ there exists an $n$ satisfying $C_d^{-1}(j-1)^{d-2} \leq n \leq j^{d-2}$ (for sufficiently large $C_d$) such that $\beta_n = (m_1, \dots, m_{d-2})$, so
  let $R$ be the interval in $\mathcal{E}'$
  \begin{equation*}
    \Bigl[0, 2^{\tau^{-1}(n)}\Bigr) \times \Bigl[0, 2^{-\tau^{-1}(n)}\Bigr) \times \overline{R}.
  \end{equation*}

  Since $\tau^{-1}(n) \leq j$, the interval $R$ must contain $[0,2^{-k})^d$, so
  \begin{equation*}
    \frac{1}{|R|} \int_R f_k \geq 1.
  \end{equation*}

  Now it is easy to show that the union these intervals has measure $\sim k^{d-2}$ (for example, one can notice that they are
  \emph{sparse}\footnote{
    By sparse here we mean that for each $R$ there exists a subset $E(R)$ with $|E(R)| \gtrsim |R|$ for all $R$ and such that the sets $E(R)$ are
    pairwise disjoint, so in particular the sum of their measures is comparable to the measure of the union.
  }), so
  \begin{equation*}
    |\{x \in \mathbb{R}^d:\, \mathcal{M}f(x) \geq 1\}| \gtrsim k^{d-2}
  \end{equation*}
  and the claim follows by testing $f_k$ in the right hand side of \eqref{WeakType}.
\end{proof}

Failure of this type of weak-type estimates implies that the the basis cannot differentiate $L \log L^{d-3}$, as shown
by I. Peral (see page 90 of \cite{GuzmanBook}).

\bibliography{article}
\bibliographystyle{abbrv}

\end{document}